\newtheorem{thm}{Theorem}[section]
\newtheorem{lemma}[thm]{Lemma}
\newtheorem{proposition}[thm]{Proposition}
\newtheorem{construction}{Construction}
\theoremstyle{definition}
\newtheorem{remark}[thm]{Remark}
\newcommand{\w}{\widetilde}
\newcommand{\wi}{\widehat}
\title[A note on flatness of some fiber type contractions]{A note on flatness of some fiber type contractions}
\author{E. A.~Romano}
\address{Istitute of Mathematics,
Faculty of Informatics, Mathematics and Mechanics,
University of Warsaw, ul. Banacha 2,
PL-02-097 Warszawa, Poland.}
\email{elrom@mimuw.edu.pl}
\begin{document}
\thanks{The project has been supported by Polish National Science
  Center grants 2013/08/A/ST1/00804 and 2016/23/G/ST1/04828. Thanks to Cinzia Casagrande for important discussions, and to the referees for their comments, expecially concerning Remark \ref{rem}.}
\subjclass{14E08, 14E30, 14J35, 14J45}
\keywords{flatness, fiber type contractions, conic bundles.}
\maketitle
\begin{abstract} We discuss the flatness property of some fiber type contractions of complex smooth projective varieties of arbitrary dimensions. We relate the flatness of some morphisms having one-dimensional fibers with their conic bundles structures, also in the general case in which some mild singularities of the varieties are admitted.
\end{abstract}
\section{Introduction}
Let $X$ be a complex normal projective variety of arbitrary dimension $n$. A \emph{contraction} of $X$ is a surjective morpism $\varphi\colon X\to Y$ with connected fibers, where $Y$ is a  complex normal projective variety. We say that $\varphi$ is $K_X$-negative (or simply $K$-negative) if $K_X$ is $\mathbb{Q}$-Cartier and it has negative intersection with each curve contracted by $\varphi$. 

In this note we deal with the case in which such a contraction $\varphi$ is of fiber type, namely $\dim{X}>\dim{Y}$.

A \textit{conic bundle} $f\colon X\to Y$ is a $K_X$-negative fiber type contraction where $X$ is smooth and whose fibers are isomorphic to plane conics; \textit{i.e.} every fiber is isomorphic as a scheme to a zero locus of a non-trivial section of $\mathcal{O}_{\mathbb{P}^{2}}(2)$. 
We refer the reader to \cite{IO,IO2,M_R} for a recent account on conic bundles, see also references therein. By \cite[IV.15.4.2.]{GROT}) a conic bundle $f\colon X\to Y$ is a flat morphism, \textit{i.e.} for every $x\in X$ the stalk $\mathcal{O}_{X,x}$ is a flat $\mathcal{O}_{Y,f(x)}$-module.  


In this note we show the flatness property of some $K$-negative fiber type contractions. On the other hand, using the flatness of the morphisms in question, we prove that they have a conic bundle structure. The starting point is the following theorem which is due to Ando (see \cite[Theorem 3.1 (ii)]{ANDO}) and it is a generalization in higher dimension of Mori's result in dimension $3$ (see \cite[Theorem 3.5, (3.5.1)]{MORI}). 

\begin{thm} \label{Ando} Let $X$ be a complex smooth projective variety and let $f\colon X\to Y$ be a $K$-negative contraction where every fiber is one-dimensional. Then $Y$ is also smooth and $f$ is a conic bundle.
	
\end{thm}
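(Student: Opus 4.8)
The plan is to follow the strategy of Mori and Ando: pin down the general fiber, then classify \emph{every} scheme-theoretic fiber as a plane conic, deduce that $f$ is flat, and finally read off both the conic bundle structure and the smoothness of $Y$ from the resulting relative projective embedding. First I would treat the generic situation: by generic smoothness (we are in characteristic zero) $f$ is smooth over a dense open $V\subseteq Y$, so $Y$ is smooth along $V$ and the general fiber $F$ is a smooth connected curve with $N_{F/X}$ trivial; adjunction then gives $\omega_F=\omega_X|_F$, hence $\deg\omega_F=K_X\cdot F<0$, forcing $F\cong\pr^1$ and $-K_X\cdot F=2$.

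Next, since $-K_X$ has positive degree on every curve contracted by $f$, in particular on every curve inside a fiber, Nakai's criterion on the one-dimensional fibers shows $-K_X|_{f^{-1}(y)}$ is ample for all $y$, so $-K_X$ is $f$-ample. By rational chain connectedness of the fibres of a $K$-negative fiber type contraction, each $f^{-1}(y)$ is a connected curve all of whose components are rational, and the length bound for a fiber type contraction of relative dimension $1$ produces, in the ray(s) contracted by $f$, a rational curve of $(-K_X)$-degree $\le 2$. These reduce the local study of $f$ to the degenerations of $\pr^1$ of total $(-K_X)$-degree $2$.

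The heart of the proof — and the main obstacle — is to show that every scheme-theoretic fiber $f^{-1}(y)$ is a plane conic: a smooth $\pr^1$, a union of two distinct lines meeting transversally at one point, or a double line. Here I would combine: (i) deformation theory of rational curves in the smooth variety $X$, through the component $\mathcal H\subset\mathrm{Hilb}(X)$ containing the general fibres (smooth of dimension $\dim Y$ at $[F]$ because $N_{F/X}$ is trivial), together with the fact that limits of maps $\pr^1\to X$ are stable maps with rational image supported on $f^{-1}(y)$, which controls the components and the scheme structure of the special fibres — this is how the double line (which occurs only over a locus of codimension $\ge 2$ in $Y$) is handled; (ii) the degree bound of the previous paragraph, which forces each reduced component to be a smooth $\pr^1$ and bounds the number of components by $2$; (iii) for the reduced cases, cutting $X$ by general pull-backs $f^*H_i$ of very ample divisors, so that $S:=f^{-1}(B)$ with $B$ a general complete intersection curve is a smooth surface with a $K$-negative fibration over the smooth curve $B$, which is flat by miracle flatness and whose fibres are classically only $\pr^1$ or a pair of $(-1)$-curves; this also yields $\chi(\mathcal O_{f^{-1}(y)})=1$ and $-K_X\cdot f^{-1}(y)=2$ wherever such an $S$ can be taken smooth. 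Ruling out all other configurations (a cuspidal rational curve, a chain or triangle of three or more lines, thicker non-reduced structures), \emph{with the correct scheme structure}, is the delicate technical point.

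Once all fibres are conics, the relative Hilbert polynomial $m\mapsto\chi\bigl(f^{-1}(y),\,-mK_X|_{f^{-1}(y)}\bigr)=2m+1$ is independent of $y$; since $Y$ is normal, hence reduced, this forces $f$ to be flat. Then the higher direct images $R^{>0}f_*(-mK_X)$ vanish, $\mathcal E:=f_*(-K_X)$ is locally free of rank $3$ and compatible with base change, and the surjection $f^*\mathcal E\to -K_X$ defines a $Y$-morphism $X\to P:=\pr(\mathcal E)$ which on each fibre is a closed embedding of $f^{-1}(y)$ as a plane conic in $\pr^2$, hence a closed embedding; this is exactly the statement that $f$ is a conic bundle, since $X$ is smooth, $f$ is $K$-negative and of fiber type, and its fibres are plane conics. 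It remains to see that $Y$ is smooth. Now $X$ is a relative divisor of degree $2$ in the $\pr^2$-bundle $P\to Y$, hence a Cartier divisor, locally defined by a nonzerodivisor $g$ with $\mathcal O_{X,x}=\mathcal O_{P,x}/(g)$; since $X$ is smooth, $\mathcal O_{X,x}$ is regular, and a Noetherian local ring having a regular quotient by a nonzerodivisor is itself regular, so $\mathcal O_{P,x}$ is regular. As $P\to Y$ is smooth (hence faithfully flat locally) and $f$ is surjective, regularity of $\mathcal O_{P,x}$ forces $\mathcal O_{Y,f(x)}$ to be regular for every $x$, i.e. $Y$ is smooth; together with the preceding paragraph this gives that $f$ is a conic bundle over the smooth base $Y$.
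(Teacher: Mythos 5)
Your overall architecture is close in spirit to Mori's and Ando's, but the step you yourself flag as ``the delicate technical point'' --- showing that every \emph{scheme-theoretic} fiber, in particular every non-reduced one, is a plane conic --- is not actually carried out, and it is the entire content of the theorem. The reduced fibers are indeed accessible by your surface-cutting argument, but that argument degenerates exactly where it is needed: a complete intersection curve $B$ through a prescribed point $y$ need not give a smooth surface $S=f^{-1}(B)$ along $f^{-1}(y)$, and the double-line fibers live over a locus of codimension $\ge 2$ that general cuts miss. For those fibers the appeal to stable maps or to the Hilbert component $\mathcal H$ does not suffice: such limits control the \emph{flat limit} of the general fiber, not the scheme-theoretic fiber $X\times_Y\{y\}$, and identifying the two is precisely the issue. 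Worse, your proposed order of argument (first prove every fiber is a conic, then deduce flatness from constancy of the Hilbert polynomial) runs into a circularity: to pin down the scheme structure of a double line one wants $\chi(\mathcal{O}_{f^{-1}(y)})=1$, which is only known once $f$ is flat. This is exactly the gap in Ando's original write-up that the present paper is written to close (see Remark \ref{remark_Ando}).

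The paper resolves this by reversing the order: flatness is proved \emph{first}, by a Hilbert-scheme argument. One takes the closure $\widetilde{Y}$ in $\mathrm{Hilb}(X)$ of the image of the flat locus, with universal family $Z\subset X\times\widetilde{Y}$ and projections $\phi\colon Z\to\widetilde{Y}$ (flat) and $\pi\colon Z\to X$ (birational); the flat limits $Z_y$ are classified (no embedded points; $\mathbb{P}^{1}$, two transverse lines, or a double structure on a line $C$ with $I_{C}^{2}\subset I_{Z_{y}}\subset I_{C}$). The key point is that $\pi$ has no positive-dimensional fibers: if a one-parameter family of Hilbert points $Z_{y}$ all had the same support $C$, each $I_{Z_{y}}/I_{C}^{2}$ would be the kernel of a surjection $I_{C}/I_{C}^{2}\to\mathcal{O}_{C}(-1)$, and the computation $\mathrm{Hom}(I_{C}/I_{C}^{2},\mathcal{O}_{C}(-1))\cong\mathbb{C}$ for the two possible splitting types of the conormal bundle shows this kernel, hence $Z_{y}$, is unique --- a contradiction. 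Hence $\pi$ is an isomorphism by Zariski's main theorem, $f=\phi$ is flat, and only then does one compute $\chi(\mathcal{O}_{f^{-1}(y)})=1$ and conclude that all fibers are conics. If you want to keep your plan, you must either import this Hilbert-scheme step or supply a genuine substitute for it; the remainder of your argument (relative embedding into $\mathbb{P}(f_{*}\mathcal{O}_{X}(-K_{X}))$, and smoothness of $Y$ via the regularity of a local ring having a regular quotient by a nonzerodivisor) is fine once flatness and the fiber classification are in place.
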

	
The goal of this paper is to discuss an alternative proof of the above theorem by proving the flatness of the contraction. Indeed, this property is not analyzed in \cite[Theorem 3.1 (ii)]{ANDO} but it represents a key point to deduce the smoothness of $Y$ and the conic bundle structure of $f$. See also our motivation explained in Remark \ref{remark_Ando}. In particular, we are going to prove the following result, from which we discuss in Remark \ref{remark_Ando} how one can deduce Theorem \ref{Ando}. 
\begin{thm} \label{thm_flatness} Let $X$ be a complex smooth projective variety, and let $f\colon X\to Y$ be a $K$-negative contraction where every fiber is one dimensional. Then $f$ is a flat morphism.
\end{thm}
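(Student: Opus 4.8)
The plan is to exploit the classical criterion that a morphism with equidimensional fibers onto a smooth (or more generally Cohen--Macaulay) base is flat as soon as the source is Cohen--Macaulay; this is ``miracle flatness'' (see \cite[IV.6.1.5]{GROT} or \cite[Theorem 23.1]{GROT}). Since $X$ is smooth it is certainly Cohen--Macaulay, so the entire difficulty is transferred to showing that $Y$ is smooth (or at least Cohen--Macaulay) and that $f$ has pure relative dimension one. The second point is immediate: $f$ is a fiber type contraction all of whose fibers are one-dimensional, so $\dim X = \dim Y + 1$ and every fiber has dimension exactly $1$; hence equidimensionality holds. So the real content is the regularity of the base $Y$.

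To handle the base, I would argue locally around a point $y\in Y$ and its fiber $F = f^{-1}(y)$. First I would record that $Y$ is normal (it is so by definition of a contraction), hence regular in codimension one, so it suffices to show $Y$ is smooth at points of codimension $\geq 2$, or better, to show directly that $\dim_k \mathfrak{m}_y/\mathfrak{m}_y^2 = \dim Y$. The key local tool is the structure of $f$ near $F$: because $-K_X$ is $f$-ample and $F$ is one-dimensional, every component of $F$ is a rational curve $C$ with $-K_X\cdot C \in \{1,2\}$, and the deformation theory of $C$ inside $X$ controls the local geometry of $Y$. Concretely, for a general fiber $C\cong\mathbb{P}^1$ one has $-K_X\cdot C = 2$, $N_{C/X}$ is globally generated of rank $n-1$ and degree $0$ (hence trivial), so $C$ deforms in an $(n-1)$-dimensional family; this family is exactly (an open subset of) $Y$, which forces $\dim Y = n-1$ and smoothness of $Y$ along the locus of such fibers. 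The delicate part is the locus of special (singular or non-reduced) fibers: here I would use that the relative $\mathrm{Pic}$ has rank one over the base together with a dimension count on $\operatorname{Hilb}$, plus the fact, available from Ando's analysis or directly from adjunction on a resolution, that even the singular fibers are plane conics, so $h^0(C,N_{C/X})$ does not jump; semicontinuity then pins down $\dim_y Y = n-1$ everywhere. Alternatively, and perhaps more cleanly, I would use Mori's technique: blow up a point of $F$, run an argument with the resulting extremal ray, and deduce that $f$ factors through a smooth morphism after base change, or invoke that a proper equidimensional map to a normal base with geometrically connected CM fibers and smooth source has CM (hence, by normality plus Serre, smooth) base.

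The step I expect to be the main obstacle is precisely establishing that $Y$ is smooth (or at least Cohen--Macaulay) \emph{without} already invoking Theorem \ref{Ando}, since a circular appeal to Ando would defeat the purpose of the note. The resolution I would pursue is to prove Cohen--Macaulayness of $Y$ directly: take the flat locus $U\subseteq Y$ (which is open, and nonempty since $f$ is flat over the regular locus by miracle flatness applied fiberwise there), show its complement has codimension $\geq 2$ using equidimensionality, and then use that $f_*\mathcal{O}_X = \mathcal{O}_Y$ together with $X$ Cohen--Macaulay and the fibers Cohen--Macaulay of constant dimension to run a local-cohomology/depth estimate pushing depth from $X$ down to $Y$; combined with normality this gives $Y$ regular, and then miracle flatness over all of $Y$ closes the argument. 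A cleaner alternative, if one is willing to cite more, is to observe that $f$ is equidimensional with $X$ smooth and then apply directly the statement that an equidimensional morphism from a Cohen--Macaulay scheme whose fibers are Cohen--Macaulay and whose image is normal is flat --- but one must check the hypotheses are genuinely weaker than the conclusion, which is the crux.
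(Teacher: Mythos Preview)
Your strategy is genuinely different from the paper's. The paper does not try to prove that $Y$ is smooth (or Cohen--Macaulay) first and then invoke miracle flatness. Instead it follows Mori's construction: pass to the Hilbert scheme of $X$, take the closure $\widetilde{Y}$ of the image of the flat locus of $f$, and pull back the universal family $Z\to\widetilde{Y}$, which is flat by design. The work is then to show that the projection $\pi\colon Z\to X$ is an isomorphism; this reduces to excluding a one-parameter family of \emph{distinct} non-reduced subschemes all having the same reduction $C\cong\mathbb{P}^1$, which the paper rules out by an explicit computation with the conormal bundle $I_C/I_C^2$ and the subsheaf $I_{Z_y}/I_C^2$ (Lemmas~\ref{fibers_phi} and~\ref{claim_2}). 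Once $\pi$ is an isomorphism, flatness of $\phi$ transfers to $f$, and smoothness of $X$ forces $\widetilde{Y}\cong Y$ to be smooth.

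Your proposal, by contrast, has a real gap in execution. The ``depth pushing'' and ``semicontinuity'' sketches do not, as written, establish that $Y$ is Cohen--Macaulay: the equality $\dim Y=n-1$ is automatic from equidimensionality and says nothing about the embedding dimension or depth of $\mathcal{O}_{Y,y}$, and there is no general principle that an equidimensional proper map from a Cohen--Macaulay source with connected fibers has Cohen--Macaulay image. Your remark on $h^0(C,N_{C/X})$ is in fact computing the tangent space to the \emph{Hilbert scheme}, not to $Y$---which is precisely why the paper routes the argument through $\widetilde{Y}\subset\operatorname{Hilb}(X)$ rather than working on $Y$ directly. The clean way to salvage your approach is one you do not mention: since $-K_X$ is $f$-ample, relative Kodaira vanishing gives $R^i f_*\mathcal{O}_X=0$ for $i>0$; combined with $f_*\mathcal{O}_X=\mathcal{O}_Y$ and smoothness of $X$, this forces $Y$ to have rational singularities, hence to be Cohen--Macaulay, and then miracle flatness applies. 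That is a legitimate and shorter alternative, but it leans on heavier vanishing theorems, whereas the paper's Hilbert-scheme argument stays close to Mori's original three-dimensional method and is more self-contained.
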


Finally, we focus on the more general case in which a complex normal projective variety $X$ with mild singularities admits a flat fiber type $K$-negative contraction with one-dimensional fibers. Let us recall that given such a variety $X$, it is \textit{Gorenstein} if it is Cohen-Macaulay and $K_X$ is a Cartier divisor. Being Cohen-Macaulay is an algebraic condition on the local rings of $X$; we refer the reader to \cite[II, $\S$8]{HART} and \cite[VII, $\S$17]{MAT} for its definition and properties.  
In the singular case, our main statement is as follows.  
\begin{proposition} \label{conic_singular}
Let $f\colon X\to Y$ be a fiber type $K_X$-negative contraction, where every fiber of $f$ is one-dimensional, $X$ is a Gorenstein projective variety with log-terminal singularities, and $Y$ is smooth. Then $f_{*}\mathcal{O}_{X}(-K_{X})$ is a locally free sheaf on $Y$ of rank $3$, $X$ can be embedded into $\mathbb{P}(f_{*}\mathcal{O}_{X}(-K_{X}))$, and through this immersion the fibers of $f$ are isomorphic to plane conics.
\end{proposition}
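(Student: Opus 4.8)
The plan is to run the classical ``relative anticanonical embedding'' that turns $f$ into a conic bundle, the whole point being to control the cohomology of $\mathcal O_{X}(-K_{X})$ on \emph{every} fibre $F=f^{-1}(y)$ --- reducible and non-reduced ones included. Since $X$ is Gorenstein, $-K_{X}$ is a line bundle, and as the only positive-dimensional subvarieties of $X$ contracted by $f$ are curves (the fibres being one-dimensional), $-K_{X}$ is $f$-ample by the relative Nakai criterion. Because $X$ is Cohen--Macaulay, $Y$ is regular, and every fibre has dimension $\dim X-\dim Y=1$, miracle flatness gives that $f$ is flat; then the fibres are one-dimensional Gorenstein schemes, $\omega_{X/Y}:=\omega_{X}\otimes f^{*}\omega_{Y}^{\vee}$ is a line bundle, and by base change for the relative dualising sheaf $\mathcal O_{X}(-K_{X})|_{F}\cong\omega_{X/Y}^{\vee}|_{F}\cong\omega_{F}^{\vee}$ for all $y$.

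Next I would nail down the numerics. A general fibre avoids $\Sing X$ (of dimension $\le\dim X-2<\dim Y$), so by generic smoothness and adjunction it is a smooth rational curve on which $-K_{X}$ has degree $2$; hence by flatness $\chi(\mathcal O_{F})=\chi(\mathcal O_{\pr^{1}})=1$ for every $y$, and Riemann--Roch plus Serre duality on the Gorenstein curve $F$ yield $\deg\omega_{F}=-2$ and $\chi(F,\omega_{F}^{\vee})=3$. Now apply relative Kawamata--Viehweg vanishing --- legitimate since $X$, being Gorenstein with log-terminal (hence canonical, hence rational) singularities, is klt --- to $\mathcal O_{X}$ and to $\mathcal O_{X}(-K_{X})$: as $-K_{X}$ and $-2K_{X}$ are $f$-ample, $R^{i}f_{*}\mathcal O_{X}=R^{i}f_{*}\mathcal O_{X}(-K_{X})=0$ for $i>0$. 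Combined with $R^{2}f_{*}=0$ (fibres of dimension $\le 1$), cohomology and base change forces $H^{1}(F,\mathcal O_{F})=H^{1}(F,\omega_{F}^{\vee})=0$ for all $y$; together with $\chi(\mathcal O_{F})=1$ this gives $h^{0}(\mathcal O_{F})=1$ and $h^{0}(F,\omega_{F}^{\vee})=\chi(F,\omega_{F}^{\vee})=3$. The same argument shows $f_{*}\mathcal O_{X}(-K_{X})$ is locally free with fibre $H^{0}(F,\omega_{F}^{\vee})$ at $y$, hence locally free of rank $3$; this is the first assertion.

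For the embedding, set $\mathcal E:=f_{*}\mathcal O_{X}(-K_{X})$. I would first show the evaluation $f^{*}\mathcal E\to\mathcal O_{X}(-K_{X})$ is surjective, which amounts to checking on each fibre that $\omega_{F}^{\vee}$ is base-point free; this produces a $Y$-morphism $i\colon X\to\pr(\mathcal E)$ with $i^{*}\mathcal O_{\pr(\mathcal E)}(1)\cong\mathcal O_{X}(-K_{X})$, finite because $-K_{X}$ is $f$-ample. On the fibre over $y$, $i$ restricts to the morphism $F\to\pr^{2}_{k(y)}=\pr\big(\mathcal E\otimes k(y)\big)$ defined by $|\omega_{F}^{\vee}|$, which one checks is a closed immersion; its image is then a one-dimensional subscheme of degree $\deg\omega_{F}^{\vee}=2$ with Hilbert polynomial $\chi\big(F,(\omega_{F}^{\vee})^{\otimes m}\big)=2m+1$, i.e.\ the zero scheme of a single quadric, so a plane conic. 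Surjectivity of $\mathcal O_{\pr(\mathcal E)}\to i_{*}\mathcal O_{X}$ is then a fibrewise Nakayama statement, whence $i$ is a closed immersion; this realises $X$ as a subscheme of $\pr(\mathcal E)$ whose fibres over $Y$ are plane conics.

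The formal ingredients above (miracle flatness, relative Kawamata--Viehweg vanishing, cohomology and base change) are cheap. The real content --- and the main obstacle --- is the fibrewise statement used twice: that a connected one-dimensional Gorenstein scheme $F$ with $\chi(\mathcal O_{F})=1$ carrying an ample line bundle $\omega_{F}^{\vee}$ of degree $2$ with $h^{0}=3$ and $h^{1}=0$ is embedded by $|\omega_{F}^{\vee}|$ as a plane conic --- equivalently, $\omega_{F}^{\vee}$ is very ample. Verifying this for the possibly reducible or non-reduced fibres requires the structure theory of Gorenstein curves of arithmetic genus zero, and it is precisely here that the Gorenstein hypothesis, the $f$-ampleness of $-K_{X}$, and flatness are all genuinely used.
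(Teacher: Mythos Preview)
Your overall architecture matches the paper's: miracle flatness from Cohen--Macaulay to smooth, constancy of $\chi$ along fibres, vanishing of $h^{1}$ on every fibre to get $\mathcal{E}=f_{*}\mathcal{O}_{X}(-K_{X})$ locally free of rank $3$, then the relative anticanonical map into $\mathbb{P}(\mathcal{E})$ and the identification of fibres as conics. The differences lie in the tools used at two points. For $h^{1}(X_{y},\mathcal{O}_{X}(-K_{X})|_{X_{y}})=0$ the paper simply invokes Mori's direct argument \cite[(3.25.1)]{MORI}, while you route through relative Kawamata--Viehweg plus cohomology-and-base-change; both are legitimate, yours being heavier but more explicit about where the log-terminal hypothesis enters. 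More importantly, the step you single out as ``the real content and main obstacle'' --- that $\omega_{F}^{\vee}$ is very ample on every (possibly non-reduced) fibre --- is precisely what the paper dispatches in one line by citing Andreatta \cite[Corollary 1.11.1]{ANDREATTA}, which gives $f$-very ampleness of $-K_{X}$ directly for $K$-negative contractions with one-dimensional fibres. With that granted, $X\hookrightarrow\mathbb{P}(\mathcal{E})$ is a divisor and the fibrewise restriction has degree $2$ in $\mathbb{P}^{2}$, so no separate classification of arithmetic-genus-zero Gorenstein curves is needed. Your plan is correct; the obstacle you anticipate is resolved in the paper by an external black box rather than by a fibrewise structure argument.
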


\section{Flatness and conic bundles structures}

The first part of this section is devoted to prove Theorem \ref{thm_flatness}. To this end, the main idea is to consider a birational modification of $X$ to make flat the morphism. We are going to generalize to higher dimension the strategy used by Mori to prove \cite[Lemma 3.25, Lemma 3.26]{MORI}. We recall that $X$ has arbitrary dimension $n$. The following construction represents the set-up for the proof of Theorem \ref{thm_flatness}. 

\begin{construction} [\cite{MORI}, (3.24)] \label{construction} 
	{\normalfont In the setting of Theorem \ref{thm_flatness}, let us consider $Y_{0}\subset Y$ the maximal open set such that $f_{\mid f^{-1}(Y_{0})}\colon f^{-1}(Y_{0})\to Y_{0}$ is a flat morphism. 
		
	Set $X_{0}:=f^{-1}(Y_{0})$. Denoting by $\text{Hilb}(X)$ the Hilbert scheme of $X$, since $f_{\mid f^{-1}(Y_{0})}$ is flat, we have an injective map $Y_{0}\to \text{Hilb}(X)$. Up to restricting $Y_{0}$, this map gives an isomorphism to an open subset of $\text{Hilb}(X)$. For simplicity of notation, we continue to denote this restriction by $Y_{0}$. 
	
	Let $\widetilde{Y}$ be the closure of $Y_{0}$ in $\text{Hilb}(X)$ with the reduced subscheme structure. 
	
	Let us consider the universal family $Z\subset X\times \widetilde{Y}$, and the two natural projections $\pi\colon Z\to X$ and $\phi\colon Z\to \widetilde{Y}$, where $\pi$ is birational, being an isomorphism over $X_{0}$, and $\phi$ is a flat morphism by construction. We have the following diagram:
	
	$$
	\xymatrix{
		&{Z}\subset X\times \widetilde{Y}\ar[dl]_\phi \ar[dr]^\pi&\\
		\widetilde{Y}&&{X}
	}$$
		
	Moreover, by Lemma \cite[Theorem 3.1 (i)]{ANDO} the general fiber of $\phi$ is isomorphic to $\mathbb{P}^{1}$. Notice that $Z$ is an irreducible and reduced projective variety, because $\phi$ is flat, $\widetilde{Y}$ is irreducible and reduced, and the general fiber of $\phi$ is isomorphic to $\mathbb{P}^{1}$. 
		
	For every $y\in \widetilde{Y}$, set $Z_{y}:=\phi^{-1}(y)\subset X\times\{y\}$. We know that $Z_{y}$ is a subscheme of $X$, and as a $1$-cycle is algebraically equivalent to the general fiber of $f$, so that for every $y\in \widetilde{Y}$, $Z_{y}$ is contracted to a point by $f$. Hence, for every $y\in \widetilde{Y}$, $Z_{y}\subset \widetilde{F}$, where $\widetilde{F}$ is a fiber of $f$. By our assumption $\dim{\widetilde{F}}=1$, and $-K_{X}\cdot Z_{y}=-K_{X}\cdot \widetilde{F}=2$, so that $Z_{y}=\widetilde{F}$ as $1$-cycles, and $(Z_{y})_{red}= \widetilde{F}_{red}$.} 
\end{construction}

   \vspace{0,5cm}
    In order to prove Theorem \ref{thm_flatness} we need to study the fibers of $\phi$. To make the exposition self contained we recall the following two lemmas. In the first one all the possibilities for such fibers are listed. The second lemma is a short version of \cite[Lemma 1.5]{ANDO}. We refer the reader also to \cite[Lemma 3.1.7, Lemma 3.1.8]{TESI} for detailed proofs of the following results.  
   \begin{lemma} [\cite{MORI}, Lemma (3.25)]\label{fibers_phi}
   	Setting as in Construction \ref{construction}. Take $y\in \widetilde{Y}$. Then $Z_{y}$ does not have embedded points, and there are three possibilities for $Z_{y}$:
   	\begin{enumerate}
   		\item [(a)] $Z_{y}\cong \mathbb{P}^{1}$;
   		\item [(b)] $Z_{y}\cong C_{1}\cup C_{2}$, where $C_{i}$ are distinct components, both isomorphic to $\mathbb{P}^{1}$, and they intersect transversally at a point;
   		\item [(c)] $Z_{y}=2C$ as $1$-cycles, with $C\cong\mathbb{P}^{1}$.
   	\end{enumerate}
   	 
   	In case $(c)$, one has that $I_{C}^{2}\subset I_{Z_{y}}\subset I_{C}$, where  $I_{Z_{y}}$ denotes the ideal subsheaf of $\mathcal{O}_{X}$ defining $Z_{y}$ in $X$, and similarly for $I_{C}$.
   \end{lemma}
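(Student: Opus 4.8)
The plan is to follow the strategy of \cite[Lemma (3.25)]{MORI}, reading off the scheme structure of $Z_{y}$ from the flatness of $\phi$, the connectedness of the fibres of $f$, and the numerical data recorded in Construction \ref{construction} (the higher-dimensional details are carried out in \cite[Lemma 3.1.7]{TESI}). First I would record the numerical constraints. Since $\phi$ is flat with general fibre $\mathbb{P}^{1}$, the Hilbert polynomial of $Z_{y}$ equals that of $\mathbb{P}^{1}$, so $\chi(\mathcal{O}_{Z_{y}})=1$. As $(Z_{y})_{red}=\widetilde{F}_{red}$ for a fibre $\widetilde{F}$ of $f$ and $f$ has connected fibres, $Z_{y}$ is connected, so its arithmetic genus is $0$. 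Writing the $1$-cycle $Z_{y}=\sum_{i}m_{i}C_{i}$, with the $C_{i}$ the distinct irreducible components of $(Z_{y})_{red}$, each $C_{i}$ is a rational curve (bend and break) and $a_{i}:=-K_{X}\cdot C_{i}$ is a positive integer because $f$ is $K$-negative and $X$ is smooth. From $\sum_{i}m_{i}a_{i}=-K_{X}\cdot Z_{y}=2$ the only possibilities are: one component with $(m_{1},a_{1})=(1,2)$; two components with $(m_{1},a_{1})=(m_{2},a_{2})=(1,1)$; one component with $(m_{1},a_{1})=(2,1)$. As cycles these match $(a)$, $(b)$, $(c)$.

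Granting that $Z_{y}$ has no embedded points, so that $\mathcal{O}_{Z_{y}}$ is Cohen--Macaulay of pure dimension $1$, the three cases close by Euler-characteristic bookkeeping. In the first two each component occurs with multiplicity $1$, so $Z_{y}$ is reduced; in case $(a)$, $Z_{y}=C$ is integral of arithmetic genus $0$, hence $C\cong\mathbb{P}^{1}$; in case $(b)$ the Mayer--Vietoris sequence $0\to\mathcal{O}_{Z_{y}}\to\mathcal{O}_{C_{1}}\oplus\mathcal{O}_{C_{2}}\to\mathcal{O}_{C_{1}\cap C_{2}}\to0$, together with $\chi(\mathcal{O}_{Z_{y}})=1$ and connectedness, forces $C_{1},C_{2}\cong\mathbb{P}^{1}$ and $\operatorname{length}(\mathcal{O}_{C_{1}\cap C_{2}})=1$, i.e. a transversal intersection at one point. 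In case $(c)$, $(Z_{y})_{red}=C$ occurs with multiplicity $2$; localising at the generic point of $C$ shows the nilradical $I_{C}/I_{Z_{y}}$ of $\mathcal{O}_{Z_{y}}$ squares to zero there, so $(I_{C}^{2}+I_{Z_{y}})/I_{Z_{y}}$ is a subsheaf of $\mathcal{O}_{Z_{y}}$ supported in dimension $0$ and therefore vanishes, giving $I_{C}^{2}\subset I_{Z_{y}}\subset I_{C}$; then $I_{C}/I_{Z_{y}}$ is a rank-one torsion-free sheaf on $C$, and $\chi(\mathcal{O}_{Z_{y}})=1$ together with the rationality of $C$ forces $C\cong\mathbb{P}^{1}$, as in \cite[Lemma (3.25)]{MORI}.

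The step I expect to be the main obstacle is the one used as a hypothesis above: that $Z_{y}$ has no embedded points. I would prove this by a local study at the finitely many points of $Z_{y}$ where $(Z_{y})_{red}$ is singular or where its components meet, using the smoothness of $X$, the fact that the components are rational curves of anticanonical degree $1$ or $2$, and the identity $\chi(\mathcal{O}_{Z_{y}})=1$; equivalently, one shows that the total space of the family obtained by pulling $\phi$ back along a general smooth curve in $\widetilde{Y}$ through $y$ is Cohen--Macaulay along $Z_{y}$, whence $Z_{y}$ is Cohen--Macaulay. This is the delicate point; it generalises \cite[Lemma (3.25)]{MORI} to arbitrary dimension, and a complete argument is given in \cite[Lemma 3.1.7]{TESI}.
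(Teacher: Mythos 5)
The paper itself contains no proof of this lemma: it is recalled from \cite[Lemma (3.25)]{MORI}, with the higher-dimensional details delegated to \cite[Lemma 3.1.7]{TESI}, so there is no in-paper argument to match yours against; your sketch follows the same (Mori's) route. The parts you actually carry out are correct: $\chi(\mathcal{O}_{Z_y})=1$ from flatness of $\phi$, the classification of the $1$-cycle from $-K_X\cdot Z_y=2$ and $-K_X\cdot C_i\ge 1$, the Euler-characteristic bookkeeping in cases $(a)$ and $(b)$ (using $\chi(\mathcal{O}_{C_i})\le 1$ for reduced irreducible curves and $\operatorname{length}(\mathcal{O}_{C_1\cap C_2})\ge 1$ from connectedness), and the localization argument giving $I_C^2\subset I_{Z_y}$ once embedded points are excluded.

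Two points, however, are left genuinely open. First, the absence of embedded points --- which you rightly single out as the crux --- is only announced, not proved. Given connectedness and $\chi(\mathcal{O}_{Z_y})=1$, it is equivalent to $h^0(\mathcal{O}_{Z_y})=1$: the torsion subsheaf $T\subset\mathcal{O}_{Z_y}$ injects into $H^0(\mathcal{O}_{Z_y})$ while the pure quotient still contains the constants, so $h^0(\mathcal{O}_{Z_y})=1$ forces $T=0$. The substance of Mori's proof (and of \cite[Lemma 3.1.7]{TESI}) is precisely the vanishing $h^1(\mathcal{O}_{Z_y})=0$, which yields $h^0=1$; your alternative suggestion --- that $Z$ (equivalently its base change to a curve through $y$) be Cohen--Macaulay along $Z_y$ --- is a correct reformulation but is itself unproved, since $Z$ is merely the reduced universal family over a closure in $\operatorname{Hilb}(X)$ and has no a priori $S_2$ property. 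Second, in case $(c)$ the step ``$\chi(\mathcal{O}_{Z_y})=1$ together with the rationality of $C$ forces $C\cong\mathbb{P}^1$'' is not a proof: a rational curve may have $p_a(C)>0$, and the identity $1=\chi(\mathcal{O}_C)+\chi(I_C/I_{Z_y})$ involves the unknown degree of the rank-one torsion-free sheaf $I_C/I_{Z_y}$, so it does not by itself give $\chi(\mathcal{O}_C)=1$. The same vanishing $h^1(\mathcal{O}_{Z_y})=0$ closes this case too, via the surjection $H^1(\mathcal{O}_{Z_y})\to H^1(\mathcal{O}_C)$ coming from $0\to I_C/I_{Z_y}\to\mathcal{O}_{Z_y}\to\mathcal{O}_C\to 0$. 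So the skeleton is right, but the one cohomological input you defer is where all the work lies, and case $(c)$ quietly uses it twice.
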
 
   \begin{remark} \label{rem} Assume that case $(c)$ of Lemma \ref{fibers_phi} holds, then $I_C/I_{Z_y}\cong \mathcal{O}_{C}(-1)$. Indeed, $I_C/I_{Z_y}$ is an $\mathcal{O}_{C}$-module of rank 1 without embedded points, that is, an $\mathcal{O}_{C}$-invertible sheaf. Since $\phi\colon Z\to \tilde{Y}$ is flat, $\chi(\mathcal{O}_{Z_y})$ is independent of $y\in \tilde{Y}$ and $\chi(\mathcal{O}_{Z_y})=1$ because $Z_{\eta}\cong \mathbb{P}^{1}$ for a general geometric point $\eta\in \tilde{Y}$. Hence $I_C/I_{Z_y}\cong \mathcal{O}_{C}(-1)$ as claimed by $\chi(I_C/I_{Z_y})=\chi(\mathcal{O}_{Z_y})-\chi(\mathcal{O}_C)=0$.
   \end{remark} 
    
    \begin{lemma} \label{claim_2} Setting as in Construction \ref{construction}. Assume that $Z_{y}=2C$ as $1$-cycles.
    	There are two possibilities for the conormal sheaf of $C$ in $X$:
    	\begin{enumerate}
    		\item [(a)] $I_{C}/I_{C}^2\cong \mathcal{O}_{C}(-1)\oplus\mathcal{O}_{C}^{n-3}\oplus  \mathcal{O}_{C}(2)$;
    		\item [(b)] $I_{C}/I_{C}^2\cong \mathcal{O}_{C}(-1)\oplus\mathcal{O}_{C}^{n-4}\oplus  \mathcal{O}_{C}(1)^{\oplus 2}$.
    	\end{enumerate}
    	 If $(a)$ holds, then $I_{Z_{y}}/I_{C^{2}}\cong \mathcal{O}_{C}^{n-3}\oplus  \mathcal{O}_{C}(2)$. Otherwise, $I_{Z_{y}}/I_{C^{2}}\cong \mathcal{O}_{C}^{n-4}\oplus  \mathcal{O}_{C}(1)^{\oplus 2}$.
    \end{lemma}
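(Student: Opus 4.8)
The plan is to compute the conormal sheaf $I_C/I_C^2$ directly, using that $C\cong\mathbb{P}^1$ is a smooth rational curve inside the smooth variety $X$, so that $I_C/I_C^2$ is a locally free $\mathcal{O}_C$-module of rank $n-1$; by Grothendieck's splitting theorem it decomposes as $\bigoplus_{i=1}^{n-1}\mathcal{O}_C(a_i)$ for integers $a_i$. First I would pin down $\sum a_i=\deg(I_C/I_C^2)$ via the adjunction-type identity $\det(I_C/I_C^2)\cong K_X|_C\otimes\omega_C^{-1}$: since $-K_X\cdot C = \tfrac12(-K_X\cdot Z_y)=1$ (because $Z_y=2C$ as $1$-cycles and $-K_X\cdot Z_y=2$ from Construction \ref{construction}) and $\deg\omega_C=-2$, one gets $\sum a_i = K_X\cdot C - \deg\omega_C = -1+2 = 1$. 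Wait — I should be careful with signs: the conormal sequence $0\to I_C/I_C^2\to \Omega_X|_C\to\Omega_C\to 0$ gives $\det(I_C/I_C^2) = \det(\Omega_X|_C)\otimes\Omega_C^{-1} = (K_X\cdot C)-(\deg\Omega_C) = -1-(-2)=1$, so $\sum_{i=1}^{n-1} a_i = 1$. This is the first constraint.

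Next I would extract a negative summand. From Remark \ref{rem} we know $I_C/I_{Z_y}\cong\mathcal{O}_C(-1)$, and from Lemma \ref{fibers_phi}(c) that $I_C^2\subset I_{Z_y}\subset I_C$, giving the surjection $I_C/I_C^2 \twoheadrightarrow I_C/I_{Z_y}\cong\mathcal{O}_C(-1)$; so $\mathcal{O}_C(-1)$ is a quotient, hence a direct summand, of $I_C/I_C^2$, and the complementary summand $I_{Z_y}/I_C^2$ is locally free of rank $n-2$ with $\sum$ of its degrees equal to $1-(-1)=2$. It remains to bound the individual $a_i$. The key positivity input is that $C$ deforms inside its fiber $\widetilde F$ of $f$ — more precisely, $C$ moves in the flat family $\phi\colon Z\to\widetilde Y$ over the point $y$ with the expected behavior, so the normal bundle $N_{C/X}$ is globally generated (or at least $H^1(N_{C/X}(-1))=0$, equivalently each $a_i\ge -1$ on the conormal side, i.e. each dual summand $\ge 1$ except for the distinguished $\mathcal{O}_C(-1)$); I would invoke the standard deformation argument as in Mori and Ando (via \cite[Lemma 1.5]{ANDO}, which this lemma is billed as a short version of) that a curve with $-K_X\cdot C=1$ appearing this way has $N_{C/X}\cong \mathcal{O}_C(1)\oplus\mathcal{O}_C^{\,n-3}\oplus\mathcal{O}_C(-2)$ or $\mathcal{O}_C(1)^{\oplus 2}\oplus\mathcal{O}_C^{\,n-4}\oplus\mathcal{O}_C(-1)^{?}$ — the only partitions of the tangent-side degree compatible with $h^0(N_{C/X})$ being as forced by the deformation inside $\widetilde F$ and with the quotient $\mathcal{O}_C(-1)$ on the conormal side. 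Dualizing yields exactly cases (a) and (b), and reading off the complement of the $\mathcal{O}_C(-1)$ summand gives the stated forms of $I_{Z_y}/I_C^2$.

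The main obstacle is the last step: ruling out the intermediate possibilities $I_C/I_C^2\cong\mathcal{O}_C(-1)\oplus\mathcal{O}_C(a)\oplus\mathcal{O}_C(2-a)\oplus\mathcal{O}_C^{\,n-4}$ for, say, $a<0$ would require the positivity/global-generation of the normal bundle that one gets only from the deformation theory of $C$ inside $X$ (indeed inside the fiber $\widetilde F$). I would therefore lean on \cite[Lemma 1.5]{ANDO} and the hypothesis that $Z\to\widetilde Y$ is a flat family whose general fiber is $\mathbb{P}^1$, so that $Z_y=2C$ is a flat limit; the obstruction space argument there forces $H^1(C,N_{C/X})=0$ away from the distinguished negative direction, which in rank-$(n-1)$ terms leaves only the two listed splittings. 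I would be careful to note that these two cases are genuinely distinct — in (a) there is a single $\mathcal{O}_C(2)$ direction and in (b) the positivity is spread over two $\mathcal{O}_C(1)$'s — and both occur, so no further reduction is possible. A short computation with the exact sequence $0\to I_C^2/I_{Z_y}I_C \to I_C/I_C^2 \otimes (I_C/I_{Z_y})^{\vee}\otimes\cdots$ — better, simply splitting off the $\mathcal{O}_C(-1)$ quotient — gives the description of $I_{Z_y}/I_C^2$ in each case with no extra work.
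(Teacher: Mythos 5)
Your setup is sound and matches the standard (Mori--Ando) argument that the paper is implicitly citing: since $C\cong\mathbb{P}^{1}$ is smooth in the smooth $X$, the conormal sheaf is locally free of rank $n-1$ and splits as $\bigoplus\mathcal{O}_{C}(a_{i})$; the conormal sequence together with $-K_{X}\cdot C=\tfrac12(-K_{X}\cdot Z_{y})=1$ gives $\sum a_{i}=1$; and Lemma \ref{fibers_phi}(c) plus Remark \ref{rem} give the surjection $I_{C}/I_{C}^{2}\twoheadrightarrow I_{C}/I_{Z_{y}}\cong\mathcal{O}_{C}(-1)$ with kernel $I_{Z_{y}}/I_{C}^{2}$ of rank $n-2$ and degree $2$. (Note the paper itself offers no proof of this lemma: it is presented as a short version of Ando's Lemma 1.5, with details deferred to the author's thesis, so the comparison is really with that cited argument.) Your last step is also fine: once (a) or (b) is known, every map from a summand of degree $\ge 0$ to $\mathcal{O}_{C}(-1)$ vanishes, so the surjection is projection onto the $\mathcal{O}_{C}(-1)$ factor and the kernel is the stated complement.

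The gap is in the middle, and it is twofold. First, ``$\mathcal{O}_{C}(-1)$ is a quotient, hence a direct summand'' is false for bundles on $\mathbb{P}^{1}$ in general: the twisted Euler sequence $0\to\mathcal{O}(-3)\to\mathcal{O}(-2)^{\oplus 2}\to\mathcal{O}(-1)\to 0$ is a non-split surjection onto $\mathcal{O}(-1)$. The splitting only becomes automatic once you know every $a_{i}\ge -1$, which is precisely the point at issue, so you cannot use it to set up the classification. Second, and more seriously, the positivity you propose to invoke --- global generation of $N_{C/X}$, or $H^{1}(N_{C/X})=0$, or $H^{1}(N_{C/X}(-1))=0$ --- is inconsistent with conclusion (a) itself: there $N_{C/X}\cong\mathcal{O}_{C}(1)\oplus\mathcal{O}_{C}^{\,n-3}\oplus\mathcal{O}_{C}(-2)$, which is not globally generated and has $H^{1}\neq 0$. (Your dualization of (b) is also garbled; it should read $\mathcal{O}_{C}(1)\oplus\mathcal{O}_{C}^{\,n-4}\oplus\mathcal{O}_{C}(-1)^{\oplus 2}$.) So no deformation-theoretic $H^{1}$-vanishing of the kind you describe can be the correct input. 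What actually forces ``exactly one negative summand, equal to $-1$'' (equivalently, $I_{Z_{y}}/I_{C}^{2}$ globally generated of degree $2$, leaving only the partitions $(2,0,\dots,0)$ and $(1,1,0,\dots,0)$) is that $C$ is the reduction of a \emph{one-dimensional} fiber of $f$: the pullbacks of generators of the maximal ideal of $f(C)$ are global sections of $I_{C}$ whose images in $I_{C}/I_{C}^{2}$ land in $I_{Z_{y}}/I_{C}^{2}$ and generate it away from a controlled locus, the control coming from the hypothesis that every fiber of $f$ is one-dimensional. That is the content of Ando's Lemma 1.5, and without importing it (or reproving it) your argument does not rule out, say, $\mathcal{O}_{C}(-2)\oplus\mathcal{O}_{C}(-1)\oplus(\text{positive part})$ or a second summand of degree $-1$.
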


\begin{proof}[Proof of Theorem \ref{thm_flatness}] Let us consider the set-up as in Construction \ref{construction}. We prove that $\pi$ is an isomorphism. Since $\pi$ is birational and $X$ is normal, it is enough to show that $\pi$ is finite. Assume by contradiction that there exists an irreducible curve $\Gamma\subset Z$ such that $\pi(\Gamma)=\{x_{0}\}$ with $x_{0}$ point of $X$. 

We see that $\phi(\Gamma):=\tilde{\Gamma}$ gives a one-dimensional family of subschemes of $X$. Let us consider $S:=\phi^{-1}(\tilde{\Gamma})$ such a family, which contains $\Gamma$. Then $\dim{\pi(S)}=1$. 

Indeed, since $\pi(\Gamma)=\{x_{0}\}$, the $1$-cycles $Z_{y}$ for $y\in \tilde{\Gamma}$ pass through $x_{0}$, and they are contracted by $f$. Thus there exists a fiber $F_{0}\subset X$ of $f$ such that $Z_{y}\subseteq F_{0}$, and  $(Z_{y})_{red}=(F_{0})_{red}$ for every $y\in \tilde{\Gamma}$. Denote by $\tilde{y}$ the general point of $\tilde{\Gamma}$. 

If $Z_{\tilde{y}}$ is reduced, then $Z_{\tilde{y}}=F_{0}$ which is a contradiction, because $\tilde{\Gamma}\subset \text{Hilb}{(X)}$ so that for $y\in\tilde{\Gamma}$ the subschemes $Z_{y}$ are distinct.

Then $Z_{y}$ is not reduced for every $y\in \tilde{\Gamma}$, and $(Z_{y})_{red}=(F_{0})_{red}=C$ with $C\cong \mathbb{P}^{1}$. 
   By Lemma \ref{fibers_phi}, we know that $I_{C}^{2}\subset I_{Z_{y}}\subset I_{C}$. We use the following exact sequence:
  \begin{equation} \label{standard}
  0\longrightarrow I_{Z_{y}}/I_{C}^{2}\longrightarrow I_{C}/I_{C}^{2}\longrightarrow I_{C}/I_{Z_{y}}\longrightarrow 0
  \end{equation}
  where $I_{Z_{y}}$ denotes the ideal subsheaf of $\mathcal{O}_{X}$ defining $Z_{y}$ in $X$, and similarly for $I_{C}$.  Assume that we are in case $(a)$ of Lemma \ref{claim_2}, so that one has $I_{C}/I_{C}^{2}\cong \mathcal{O}_{C}(-1)\oplus\mathcal{O}_{C}^{n-3}\oplus  \mathcal{O}_{C}(2)$, and by Remark \ref{rem} we know that $I_{C}/I_{Z_{y}}\cong \mathcal{O}_{C}(-1)$. Then from (\ref{standard}) we get the following exact sequence
  \begin{equation} \label{split}
  	0\longrightarrow I_{Z_{y}}/I_{C}^{2}\stackrel{\alpha}{\longrightarrow} \mathcal{O}_{C}(-1)\oplus\mathcal{O}_{C}^{n-3}\oplus  \mathcal{O}_{C}(2)\stackrel{\beta}{\longrightarrow} \mathcal{O}_{C}(-1)\longrightarrow 0
  \end{equation}
  
  By the isomorphisms $\text{Hom}(\mathcal{O}_{C}(-1)\oplus\mathcal{O}_{C}^{n-3}\oplus  \mathcal{O}_{C}(2), \mathcal{O}_{C}(-1))\cong H^{0}(\mathcal{O}\oplus \mathcal{O}(-1)^{\oplus n-3}\oplus \mathcal{O}(-3))\cong \mathbb{C}$, it follows that up to multiplication by scalars the map $\beta$ of (\ref{split}) is unique. 

Thus the subsheaf $I_{Z_{y}}/I_{C}^{2}\subset I_{C}/I_{C}^{2}$ is uniquely determined, then also $I_{Z_{y}}\subset I_{C}$ is uniquely determined. This contradicts the non constancy of the family $S\to \tilde{\Gamma}$. Repeating the same argument, we get a contradiction also when Lemma \ref{claim_2} $(b)$ holds.

Then $\pi\colon Z\to X$ is an isomorphism. Since $\phi$ is a flat morphism and $X$ is smooth, by \cite[Proposition 17.3.3 (i)]{GROT} it follows that $\widetilde{Y}$ is also smooth. 

Applying Lemma \ref{fibers_phi}, we deduce that $\phi$ and $f$ have the same fibers, so that using \cite[Proposition 1.14]{DEB}, we find that $\phi=f$, and $\widetilde{Y}\cong Y$. Then $Y$ is smooth and $f$ is a flat morphism. 
\end{proof}

\begin{remark} \label{remark_Ando}
	In \cite[Theorem 3.1 (ii)]{ANDO} Ando proved Theorem \ref{Ando}. He showed that the fibers of $f$ have at most two irreducible components, and he analized the three possible cases for the fibers to prove that they are isomorphic to plane conics. 
	
	To this end, when the fiber $F=2C$ as $1$-cycle with $C\cong \mathbb{P}^{1}$, in \cite[pag. 356, case 3]{ANDO} Ando claims that $\chi(\mathcal{O}_{F})=1$. We could not understand how to deduce that $\chi(\mathcal{O}_{F})=1$, without knowing that $f$ is flat. For this reason, to get Theorem \ref{Ando}, first we need to prove Theorem \ref{thm_flatness}. 
	Then the proof of Theorem \ref{Ando} runs as done in \cite[Theorem 3.1 (ii)]{ANDO}. 
\end{remark}
\vspace{0,2cm}
Now we show Proposition \ref{conic_singular}. This is probably well-known to experts, but we include a proof for lack of references. To this end, we start with the following easy observation. 

   \begin{remark} \label{flat2} Let $f\colon X\to Y$ be a flat morphism between projective varieties and let $\mathcal{F}$ be a coherent locally free sheaf on $X$. Then $\mathcal{F}$ is flat over $Y$, namely for every $x\in X$, the stalk $\mathcal{F}_{x}$ is a flat $\mathcal{O}_{Y,f(x)}$-module\footnote{Notice that we consider $\mathcal{F}_{x}$ as an $\mathcal{O}_{Y,f(x)}$-module via the natural map $\mathcal{O}_{Y,f(x)}\to \mathcal{O}_{X,x}$.}.
\end{remark}
\begin{proof}  By \cite[III, Proposition 9.2 (e)]{HART} it follows that for every $x\in X$, $\mathcal{F}_{x}$ is a flat $\mathcal{O}_{X,x}$-module. Since $f$ is flat, by definition $\mathcal{O}_{X,x}$ is a flat $\mathcal{O}_{Y,f(x)}$- module, hence by the property of transitivity of flat sheaves (see \cite[III, Proposition 9.2 (c)]{HART}), it follows that for every $x\in X$, $\mathcal{F}_{x}$ is a flat $\mathcal{O}_{Y,f(x)}$-module, hence the statement. 
\end{proof}
\begin{proof}[Proof of Proposition \ref{conic_singular}]
We notice that $f$ is an equidimensional morphism from a Cohen-Macaulay variety to a smooth variety, so that it is flat (see for instance \cite[Corollary of Theorem 23.1]{MAT}). Using the same proof of \cite[Lemma 3.1 (i)]{ANDO}, one can see that the general fiber of $f$ is isomorphic to $\mathbb{P}^{1}$. We show that the fibers are isomorphic to plane conics. Set $\mathcal{E}:=f_{*}\mathcal{O}_{X}(-K_{X})$. We prove that $\mathcal{E}$ is a locally free sheaf on $Y$ of rank 3. Set $\mathcal{F}:= \mathcal{O}_{X}(-K_{X})$, that by Remark \ref{flat2} is flat over $Y$. If we denote by $X_{y}$ the fiber over $y \in Y$, since $f$ is a flat morphism, we know that $\chi{(X_{y},\mathcal{F}_{y})}=h^{0}(X_{y}, \mathcal{F}_{y})-h^{1}(X_{y}, \mathcal{F}_{y})$ is constant. Using the same argument of \cite[(3.25.1)]{MORI} it is easy to check that $h^{1}(X_{y}, \mathcal{F}_{y})=0$ for every $y\in Y$. 


Then $\chi{(X_{y},\mathcal{F}_{y})}=h^{0}(X_{y},\mathcal{F}_{y})=h^{0}(\mathbb{P}^{1}, \mathcal{O}_{\mathbb{P}^{2}}(2))=3$. 

Hence by \cite[Proposition 3.1.9]{HART} we deduce that $\mathcal{E}$ is locally free sheaf on $Y$ of rank 3, so that we have a $\mathbb{P}^{2}$-bundle $\pi\colon \mathbb{P}(\mathcal{E})\to Y$. Since every fiber is one-dimensional, using \cite[Corollary 1.11.1]{ANDREATTA}, it follows that $\mathcal{F}$ is $f$-very ample on $X$ so that $f^{*}f_{*}\mathcal{F}$ gives an immersion $X\hookrightarrow \mathbb{P}(\mathcal{E})$ over $Y$, and $f=\pi_{\mid X}$. Being $\dim{\mathbb{P}(\mathcal{E})}=\dim{Y}+2=\dim{X}+1$, $X$ is embedded as a divisor of $\mathbb{P}(\mathcal{E})$. 

Finally, we prove that the restriction of $X$ to every fiber of $\pi$ is one-dimensional and belongs to $|\mathcal{O}_{\mathbb{P}^{2}}(2)|$. Denoting by $l$ a fiber of $f$, we have that $l$ is the intersection between $X$ and a fiber $l^{\prime}$ of $\pi$, hence $l^{\prime}\cong \mathbb{P}^{2}$. Now, using that $X$ is a divisor and that $-K_{X}$ has degree 2 on every fiber of $f$, we get $l= X_{{\mid}l^{\prime}}=\mathcal{O}_{\mathbb{P}^{2}}(2)$, hence our claim. 
\end{proof}
%
\bibliographystyle{plain} 
\bibliography{main.bib}
\end{document}